\documentclass{article}

\usepackage{amssymb}
\usepackage{amsmath}
\usepackage{amsthm}
\newtheorem{theorem}{Theorem}[section]
\newtheorem{lemma}[theorem]{Lemma}

\theoremstyle{definition}

\newtheorem{example}[theorem]{Example}

\theoremstyle{remark}

\numberwithin{equation}{section}



    \newcommand{\st}{^{\textstyle {\ast }}}
   \newcommand{\ph}{\mbox{$\varphi$}}
    
    \renewcommand{\phi}{\varphi}
   \newcommand{\Ht}{\mbox{$H^{2}$}}

   \newcommand{\D}{\mbox{$\mathbb{D}$}}
   \newcommand{\C}{\mbox{$C_{\varphi}$}}
   \newcommand{\Cs}{\mbox{$C_{\varphi}\st$}}

 %
 %
 %
 	\newfont{\caps}{cmcsc9}  
 	\newfont{\jour}{cmti9}  
 %
 %

\begin{document}
\title{Binormal, Complex Symmetric Operators}
\author{Caleb Holleman, Thaddeus McClatchey, Derek Thompson}
\maketitle

\begin{abstract}
    In this paper, we describe necessary and sufficient conditions for a binormal or complex symmetric operator to have the other property. Along the way, we find connections to the Duggal and Aluthge transforms, and give further properties of binormal, complex symmetric operators. 
\end{abstract}

\section{Introduction} \label{intro}
    
    
    
    

    An operator $T$ on a Hilbert space is said to be a \textit{complex symmetric operator} if there exists a \textit{conjugation} $C$ (an isometric, antilinear involution) such that $CTC = T^*$. An operator is said to be \textit{binormal} if $T^*T$ and $TT^*$ commute. 
    Garcia initiated the study of complex symmetric operators \cite{GarciaPutinar}; researchers have made much progress in the last decade. The study of binormal operators was initiated by Campbell \cite{Campbell} in 1972. 
        
    A normal operator is both binormal and complex symmetric. As we will see below, there are many classes of operators for which normal operators are a subclass, several of which are equivalent to normality in finite dimensions. However, both binormality and complex symmetry are meaningful for matrices. Both are properties that automatically transfer to the adjoint of an operator. Furthermore, both have a joint connection to square roots of normal operators. For these reasons, we investigate the possible connections between binormality and complex symmetry.
    
    In the rest of Section \ref{intro}, we give necessary preliminaries. In Section \ref{cso to bin}, we give exact conditions for when a complex symmetric operator is binormal. In Section \ref{bin to cso}, we give conditions for when a binormal operator is complex symmetric in terms of the Duggal and Aluthge transforms. In Section \ref{bin and cso}, we discuss properties of binormal, complex symmetric operators. Lastly, in Section \ref{questions}, we pose questions for further study. 
    \newline
    
    \textbf{A possible point of confusion.} Campbell realizes in his second paper \cite{Campbell2} that Brown \cite{Brown} had already used the term binormal for an entirely different condition. Campbell stated that usage of the term was not current, so he continued with his use of binormal to describe the definition we use here. Unfortunately, both definitions are still in use \cite{GarciaWogen}, \cite{Kim}. In particular, Garcia and Wogen have shown that every binormal operator (in the sense of Brown) is complex symmetric \cite{GarciaWogen}. Throughout this paper, we use the term only in the sense of Campbell. 
    \newline

    \subsection{Preliminaries}
        An operator $T$ is
        
        \begin{enumerate}
            \item \textit{normal} if $T$ commutes with $T^*$;
            \item \textit{quasinormal} if $T$ commutes with $T^*T$;
            \item \textit{subnormal} if $T$ is the restriction of a normal operator to an invariant subspace;
            \item \textit{hyponormal} if $||Tx|| \geq ||T^*x|| \textrm{ }\forall x$.
        \end{enumerate}
        
        These properties form a chain of inheritance: normality implies quasinormality; quasinormality implies subnormality; and subnormality implies hyponormality. Every hyponormal operator with a spectrum of zero area is normal \cite{Stampfli}, so these four conditions are equivalent and therefore uninteresting in finite dimensions. The next result shows that complex symmetry stands entirely apart from these other properties when $T$ is not normal. The proof below is owed to Garcia and Hammond \cite{GarciaHammond}.

        \begin{theorem}\label{hyponormal} An operator which is both hyponormal and complex symmetric is normal.\end{theorem}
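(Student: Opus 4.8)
The plan is to prove this by combining the complex symmetric condition with the hyponormal inequality to force $T$ to commute with $T^*$. The key structural fact I would use is that complex symmetry is a property that passes to the adjoint: if $CTC = T^*$, then conjugating both sides and using $C^2 = I$ gives $CT^*C = T$, so $T^*$ is complex symmetric via the \emph{same} conjugation $C$. This symmetry between $T$ and $T^*$ is exactly what a hyponormal operator lacks in general, and I expect exploiting it to be the heart of the argument.

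First I would record the hyponormality inequality in the form $\langle (T^*T - TT^*)x, x \rangle \geq 0$ for all $x$; that is, the self-adjoint operator $D := T^*T - TT^*$ is positive semidefinite. The goal is to show $D = 0$. Next I would apply the conjugation. Since $C$ is antilinear and isometric with $CTC = T^*$ and $CT^*C = T$, I would compute $CDC = C(T^*T - TT^*)C = (CT^*C)(CTC) - (CTC)(CT^*C) = T T^* - T^* T = -D$. So conjugation sends $D$ to $-D$. Now for any vector $x$, positivity of $D$ gives $\langle Dx, x\rangle \geq 0$, while applying positivity to the vector $Cx$ and using antilinearity of $C$ together with the isometry relation $\langle Cu, Cv\rangle = \langle v, u\rangle$ yields $\langle D(Cx), Cx\rangle = \langle (CDC)x, x\rangle^{-} $ type identity; carefully tracking the conjugate-linearity, this reduces to $\langle -Dx, x\rangle \geq 0$, i.e. $\langle Dx, x\rangle \leq 0$. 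Combining the two inequalities forces $\langle Dx, x\rangle = 0$ for all $x$, and since $D$ is self-adjoint this gives $D = 0$, which is precisely normality.

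The step I expect to be the main obstacle is the antilinear bookkeeping in the previous paragraph, specifically getting the right sign when I push the positivity of $D$ through the conjugation $C$. The subtlety is that $C$ reverses inner products, $\langle Cu, Cv \rangle = \langle v, u \rangle$, so I must be careful whether $\langle D(Cx), Cx\rangle$ equals $\langle (CDC)x, x\rangle$ or its complex conjugate; since $D$ is self-adjoint and the quantities involved are real, the conjugation is harmless, but I would want to verify this cleanly rather than wave at it. An alternative route that sidesteps some of this, and which I would keep in reserve, is to observe that $T^*$ is hyponormal as well once we know $T$ is complex symmetric: complex symmetry gives $CTC = T^*$, and a short computation shows $\|T^*y\| = \|TCy\| \cdot$ (isometry factors) so that the hyponormal inequality for $T$ transfers to the reverse inequality $\|T^*x\| \geq \|Tx\|$ for $T$ itself. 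Having both $\|Tx\| \geq \|T^*x\|$ and $\|Tx\| \leq \|T^*x\|$ for all $x$ immediately yields $\|Tx\| = \|T^*x\|$, which is the standard characterization of normality. Either formulation lands in the same place; the whole proof should be only a few lines once the conjugation identity $CDC = -D$ (or equivalently the norm transfer) is established.
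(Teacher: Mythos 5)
Your proposal is correct, and it is essentially the paper's argument in two costumes: your primary route via $D := T^*T - TT^*$ and the identity $CDC = -D$ is exactly the quadratic-form restatement of the paper's norm chain, since $\langle Dx,x\rangle = \|Tx\|^2 - \|T^*x\|^2$ and evaluating positivity at $Cx$ corresponds step-for-step to the paper's computation $\|Tx\| = \|T^*Cx\| \leq \|TCx\| = \|T^*x\|$. Your reserve ``norm transfer'' argument is verbatim the proof the paper gives (attributed to Garcia and Hammond), and your handling of the antilinear bookkeeping is sound because the relevant quantities are real.
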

            \begin{proof}  Given a hyponormal operator  $T$, $||Tx|| \geq ||T^*x|| \textrm{ }\forall x$. Since $T$ is also complex symmetric, there is a conjugation $C$ so that $T = CT^*C$ and $CT = T^*C$. Then $$||Tx|| = ||CT^*Cx|| = ||T^*Cx|| \leq ||TCx|| = ||CT^*x|| = ||T^*x||.$$
                
            Since we now also have $||Tx|| \leq ||T^*x||$, $T$ is normal. \end{proof}

        Binormality is an offshoot in the above chain of inherited properties. Every quasinormal operator is binormal, but not every subnormal operator is binormal. An operator can be non-trivially complex symmetric and binormal, unlike above.

        There is a further connection between binormality and complex symmetry that shows their union lives in a different space than hyponormal operators. When $T^2$ is normal, $T$ is never non-trivially hyponormal \cite{Campbell2}, but we have a much different case here. The following result provides context for later results related to squares of operators. 
        
        \begin{theorem}\label{square} If $T^2$ is normal, then $T$ is both binormal and complex symmetric.  \end{theorem}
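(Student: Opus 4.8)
The plan is to treat the two conclusions separately: binormality is a quick consequence of Fuglede--Putnam, while complex symmetry requires the structure theory of square roots of a normal operator.

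\emph{Binormality.} First I would record two commutation relations. Since $T$ commutes with $N:=T^2$ and $N$ is normal, the Fuglede--Putnam theorem gives that $T$ commutes with $N^*=T^{*2}$ as well; that is, (i) $TT^{*2}=T^{*2}T$, and taking adjoints, (ii) $T^2T^*=T^*T^2$. Then one computes directly
\[
(TT^*)(T^*T)=TT^{*2}T \overset{(i)}{=} T^{*2}T^2, \qquad (T^*T)(TT^*)=T^*T^2T^* \overset{(ii)}{=} T^2T^{*2}.
\]
These two expressions agree precisely because $NN^*=N^*N$, i.e.\ because $T^2$ is normal; hence $T^*T$ and $TT^*$ commute and $T$ is binormal. (In fact this argument shows that, once (i) and (ii) are in hand, binormality is \emph{equivalent} to $T^2T^{*2}=T^{*2}T^2$.)

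\emph{Complex symmetry.} Here I would exploit that, by the relations above, $T$ lies in the commutant $\{N,N^*\}'$ of the normal operator $N=T^2$. Assuming the Hilbert space is separable, the multiplicity theory for $N$ realizes $H$ as a direct integral $\int^{\oplus}H(\lambda)\,d\mu(\lambda)$ over $\sigma(N)$ on which $N$ acts as multiplication by $\lambda$; since $T$ commutes with both $N$ and $N^*$ it is decomposable, $T=\int^{\oplus}T(\lambda)\,d\mu(\lambda)$, and the identity $T^2=N$ forces $T(\lambda)^2=\lambda\,I_{H(\lambda)}$ for $\mu$-almost every $\lambda$. The key reduction is thus a fiberwise statement: any operator $S$ with $S^2=cI$ is complex symmetric. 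For $c\neq 0$, $c^{-1/2}S$ is an involution, equivalently $\tfrac12(I+c^{-1/2}S)$ is an idempotent, and the standard unitary model of an idempotent as a direct sum of $I$, $0$, and blocks $\left(\begin{smallmatrix}I & X\\ 0 & 0\end{smallmatrix}\right)$ with $X\ge 0$ expresses $S$ (after diagonalizing $X$) as scalars and $2\times 2$ blocks. For $c=0$, $S$ is square-zero and its polar decomposition exhibits it as a direct sum of a zero operator and $2\times 2$ blocks $\left(\begin{smallmatrix}0 & s\\ 0 & 0\end{smallmatrix}\right)$. Since every operator on a space of dimension at most two is complex symmetric and complex symmetry passes to direct sums, each fiber $T(\lambda)$ is complex symmetric; choosing the fiber conjugations $C(\lambda)$ measurably and assembling $C=\int^{\oplus}C(\lambda)\,d\mu(\lambda)$ yields a conjugation with $CTC=T^*$.

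\emph{Main obstacle.} The binormality half is routine. The substance is the complex-symmetry half, and within it the genuine difficulties are (a) justifying the decomposability of $T$ over $\sigma(N)$ from the multiplicity theory, and (b) selecting the fiber conjugations $C(\lambda)$ measurably so that the assembled $C$ is a bona fide conjugation implementing $CTC=T^*$. I expect (b) to be the real technical hurdle. A partial way to lighten the bookkeeping is to first split off the reducing subspace $H_0=\ker T^2$, on which $T$ is square-zero and hence directly complex symmetric, and treat the injective part separately; but the fiberwise analysis of square roots of scalars remains the crux of the argument.
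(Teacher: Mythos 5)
Your binormality half is complete and correct: the Fuglede--Putnam step giving $TT^{*2}=T^{*2}T$ and its adjoint $T^2T^*=T^*T^2$, followed by the computation $(TT^*)(T^*T)=T^{*2}T^2$ and $(T^*T)(TT^*)=T^2T^{*2}$, which agree exactly because $T^2$ is normal, is a genuine proof of what the paper simply cites: the paper's entire argument for this theorem is to quote \cite[Theorem 1]{CampbellEP} for binormality and \cite[Corollary 3]{GarciaWogen} for complex symmetry. So on the first half you have in effect reproved the cited result, and cleanly.

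The complex-symmetry half, however, contains a genuine gap, and it is the one you flagged yourself. Your worry (a) is actually fine: taking the direct integral relative to the abelian von Neumann algebra $W^*(N)$, that algebra becomes the diagonalizable operators, its commutant is the decomposable operators, and $T\in\{N,N^*\}'=W^*(N)'$ is decomposable with $T(\lambda)^2=\lambda I$ almost everywhere; the fiberwise structure theory of operators with $S^2=cI$ is also sound. What is not routine is precisely (b): ``choose the $C(\lambda)$ measurably and integrate.'' The unitaries implementing the idempotent model $I\oplus 0\oplus\left(\begin{smallmatrix}I & X\\ 0 & 0\end{smallmatrix}\right)$, the spectral diagonalization of $X$, and the resulting fiber conjugations must all be selected jointly measurably in $\lambda$, and nothing in your sketch produces such a selection. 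This is not a pedantic objection: complex symmetry is known to behave badly under limiting constructions (the class of complex symmetric operators is not even norm closed in infinite dimensions), so assembling a conjugation from fibers is exactly where the technical content lives. The repaired argument is essentially Garcia and Wogen's: a square root of a normal operator is binormal \emph{in the sense of Brown}, i.e.\ unitarily equivalent to a $2\times 2$ operator matrix with commuting normal entries --- a fact the paper's own introduction alludes to when it notes that Garcia and Wogen proved every Brown-binormal operator is complex symmetric --- and one then embeds the abelian algebra generated by those entries in a masa $L^\infty(\mu)$ acting on $L^2(\mu)$, so that $T$ becomes a $2\times 2$ matrix of multiplication operators and a single conjugation can be written down from a formula that is Borel in the entries, sidestepping fiber-by-fiber selection. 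Your separability assumption is a further (minor) restriction the stated theorem does not make. In short: binormality done; complex symmetry is a sound blueprint that parallels the cited proof, but the deferred assembly step is the entire substance of \cite[Corollary 3]{GarciaWogen} and remains unproved in your write-up.
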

            \begin{proof} If $T^2$ is normal, then $T$ is binormal by \cite[Theorem 1]{CampbellEP}, and complex symmetric by \cite[Corollary 3]{GarciaWogen}. \end{proof}
        
        \begin{example}\label{compop} Consider the operator \C\ on \Ht\ induced by the involutive automorphism $\ph = \frac{a-z}{1-\overline{a}z}$ for some $a \in \D$. This operator was proved to be binormal in \cite{Kim} and complex symmetric in \cite{GarciaHammond}. Both properties quickly follow from the fact that $(\C)^2 = I$ and Theorem \ref{square}.  Furthermore, the only normal composition operators on \Ht\ are induced by functions of the form $|\lambda|z, |\lambda| \leq 1$. Therefore, \C\ is complex symmetric and binormal, but not normal. \end{example}
        
        Unfortunately, Theorem \ref{square} is not biconditional, as the next example shows. While the complex symmetry of $T$ necessarily carries to $T^2$, there are binormal, complex symmetric operators $T$ such that $T^2$ is not even binormal, let alone normal. 
        
        \begin{example} The matrix $T=\begin{bmatrix}
        -1 & 0 & -1\\ 
        -1 & 0 & 1\\ 
        0 & 1 & 0
        \end{bmatrix}$ is binormal, and complex symmetric by the Strong Angle Test \cite{Balayan}. However, $T^2=\begin{bmatrix}
        1 & -1 & 1\\ 
        1 & 1 & 1\\ 
        -1 & 0 & 1
        \end{bmatrix}$, which is not binormal.
        \end{example}
        
        Now, since we do not have an exact characterization of when binormality and complex symmetry jointly exist, we instead turn our focus in the next two sections to how one property can induce the other.

\section{When are complex symmetric operators binormal?} \label{cso to bin}

    We begin with the assumption that $T$ is complex symmetric. In this direction, we have an exact characterization of binormality, in terms of the conjugation $C$ corresponding to $T$. 
    
    \begin{theorem} A complex symmetric operator $T$ with conjugation $C$ is binormal if and only if $C$ commutes with $TT^*T^*T$ (equivalently, $T^*TTT^*$). \end{theorem}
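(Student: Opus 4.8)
The plan is to exploit the single algebraic identity $CTC = T^*$ (equivalently $CT^*C = T$, since $C^2 = I$) together with the fact that conjugation by a conjugation is multiplicative, i.e.\ $C(AB)C = (CAC)(CBC)$ for any bounded operators $A,B$. First I would record the key consequence: applying this with $A = T^*$, $B = T$ gives $C(T^*T)C = (CT^*C)(CTC) = TT^*$, and symmetrically $C(TT^*)C = T^*T$. Thus $C$ interchanges the two positive operators $T^*T$ and $TT^*$. This single observation drives everything.

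Next I would push the same computation through the degree-four product. Writing $TT^*T^*T = T\cdot T^*\cdot T^*\cdot T$ and conjugating factor by factor,
\[
C(TT^*T^*T)C = (CTC)(CT^*C)(CT^*C)(CTC) = T^*TTT^*.
\]
So conjugation by $C$ carries $TT^*T^*T$ to $T^*TTT^*$, and applying $C$ again carries it back. Separately, I would unwind the definition of binormality: that $T^*T$ and $TT^*$ commute means $(T^*T)(TT^*) = (TT^*)(T^*T)$, which is literally the equation $T^*TTT^* = TT^*T^*T$.

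The final step is the standard equivalence between commuting with an involution and being invariant under it: for the linear operator $S = TT^*T^*T$, since $C = C^{-1}$ we have $CS = SC$ if and only if $CSC = S$. Combining this with the computation above, $C$ commutes with $TT^*T^*T$ iff $C(TT^*T^*T)C = TT^*T^*T$ iff $T^*TTT^* = TT^*T^*T$ iff $T$ is binormal. The parenthetical form in the statement follows identically, since $C$ swaps the two products and both commutation conditions reduce to the same identity $TT^*T^*T = T^*TTT^*$.

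I expect the only place demanding care to be the bookkeeping of antilinearity: one must check that $C(AB)C = (CAC)(CBC)$ and the ``commute iff invariant'' equivalence remain valid for an \emph{antilinear} involution. Both do, because $C^2 = I$ and each intermediate operator $CAC$ is genuinely linear. No deeper step seems to be required — the result is essentially forced once one notices that $C$ interchanges $T^*T$ and $TT^*$.
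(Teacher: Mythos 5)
Your proposal is correct and rests on exactly the same algebra as the paper's proof: both reduce binormality to the identity $TT^*T^*T = T^*TTT^*$ by moving $C$ through the product via $CTC = T^*$ and $C^2 = I$. Your packaging --- observing that conjugation by $C$ swaps $TT^*T^*T$ and $T^*TTT^*$, so commuting with the involution $C$ is equivalent to invariance under it --- handles both directions at once where the paper verifies each implication by a separate chain of identities, but it is a reorganization rather than a genuinely different route.
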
\label{commute}
    
        \begin{proof} Since $T$ is complex symmetric,  $CT = T^*C$ and $TC = CT^*$. In the first direction, we will assume that $C$ commutes with $TT^*T^*T$. Then 
        \begin{align*}
            T^*TTT^* &= T^*TCCT^*T \\
            &= T^*CT^*T^*CT^* \\
            &= CTT^*T^*TC \\
            &= TT^*T^*TCC \\
            &= TT^*T^*T, 
        \end{align*} so $T$ is binormal.
        
        Conversely, suppose $T$ is binormal. Then 
        \begin{align*} 
        TT^*T^*TC &= T^*TTT^*C \\ 
        &= T^*TCCTT^*C \\
        &= T^*CT^*T^*CT^*C \\ 
        &= CTT^*T^*TCC \\
        &= CTT^*T^*T.
        \end{align*} 
        In either direction, the proofs using $T^*TTT^*$ are identical. \end{proof}

    

    In general, if $T$ and $C$ are well understood, then Theorem \ref{commute} is not productive. One can simply write $T^* = CTC$ and check binormality directly. However, if $T$ is understood to be binormal and complex symmetric without knowing the appropriate conjugation (for example, if $T^2$ is normal), then Theorem \ref{commute} puts necessary conditions on possible choices for $C$. 

\section{When are binormal operators complex symmetric?} \label{bin to cso}

    In this section, we assume $T$ is binormal and find conditions for which $T$ is complex symmetric. We give an exact characterization in terms of the Duggal transform, and describe the stranger situation involving the Aluthge transform. First, we define the two operations. 
    \newline
    
    \textbf{Definition.} If $T = U|T|$ is the polar decomposition of $T$, then the \textit{Duggal transform} $\widehat{T}$ of $T$ is given by $\widehat{T} = |T|U$. 
\newline

 \textbf{Definition.} If $T = U|T|$ is the polar decomposition of $T$, then the \textit{Aluthge transform} $\tilde{T}$ of $T$ is given by $\tilde{T}=|T|^{1/2}U|T|^{1/2}$.
    
    \begin{theorem}\label{duggal} A binormal operator $T$ with polar decomposition $T = U|T|$ is complex symmetric if and only if $\widehat{T} = |T|U$ is complex symmetric. \end{theorem}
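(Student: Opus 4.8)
The plan is to reduce the statement to the behaviour of the partial isometry in the polar decomposition and to transport the conjugation through the Duggal transform. Write $A=|T|$ and recall that $|T^{*}|=U A U^{*}$ and that binormality is equivalent to $A$ commuting with $U A U^{*}$, i.e. $[\,|T|,|T^{*}|\,]=0$. The first observation I would record is the identity $\widehat{T}=U^{*}TU$, valid for every operator: since $U^{*}U$ is the projection onto $\overline{\operatorname{ran}}|T|=(\ker T)^{\perp}$ and $|T|$ has range in that subspace, we get $U^{*}TU=(U^{*}U)|T|U=|T|U=\widehat{T}$. Thus the Duggal transform is a conjugation of $T$ by the partial isometry $U$, which is a genuine unitary conjugation precisely when $U$ is unitary.

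The easy case is therefore when $U$ is unitary (equivalently $\ker T=\ker T^{*}=0$). Here complex symmetry, being a unitary invariant, transfers immediately: if $CTC=T^{*}$ then $C':=U^{*}CU$ is again a conjugation and $C'\widehat{T}C'=U^{*}C\,T\,CU=U^{*}T^{*}U=\widehat{T}^{*}$, so $\widehat{T}$ is complex symmetric; the reverse implication is identical using $T=U\widehat{T}U^{*}$. No binormality is needed in this case.

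To reach the general case I would exploit the one structural fact that complex symmetry forces on the kernels. From $CTC=T^{*}$ one gets $Cx\in\ker T^{*}$ whenever $x\in\ker T$, so $C$ restricts to an antilinear isometry of $\ker T$ onto $\ker T^{*}$; in particular $\dim\ker T=\dim\ker T^{*}$. This equality is exactly what lets $U$ (a partial isometry from $(\ker T)^{\perp}$ onto $\overline{\operatorname{ran}}\,T$) be completed to a unitary $\widetilde{U}$, by letting it act on $\ker T$ via the conjugation-induced map to $\ker T^{*}$. Binormality re-enters through $[\,|T|,|T^{*}|\,]=0$: this commutation is what allows me to move $|T|$ past the unitary completion and recover the relation $\widehat{T}=|T|U$ after replacing $U$ by $\widetilde{U}$ up to a term supported on $\ker T$, so that $C':=\widetilde{U}^{*}C\widetilde{U}$ (now a bona fide conjugation) still intertwines $\widehat{T}$ and $\widehat{T}^{*}$. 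The reverse implication follows by the symmetric construction, since the roles of $T=U|T|$ and $\widehat{T}=|T|U$ are interchangeable and $\widehat{T}$ is again binormal.

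I expect the main obstacle to be precisely this kernel bookkeeping: when $T$ is not injective the equality $\widehat{T}=|T|U$ involves the \emph{non-unitary} $U$, while the conjugation one wants to write down, $U^{*}CU$, fails to be isometric or involutive on $\ker T$ and $\ker T^{*}$. The content of the theorem is that binormality (through $[\,|T|,|T^{*}|\,]=0$) together with the dimension match $\dim\ker T=\dim\ker T^{*}$ supplied by complex symmetry conspire to repair this defect; checking that the correction term produced when $|T|$ acts on $\ker T^{*}$ does not spoil the intertwining $C'\widehat{T}C'=\widehat{T}^{*}$ is the step that will require the most care.
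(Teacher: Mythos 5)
Your reduction $\widehat{T}=U^{*}TU$ and the unitary case are correct, and that case does coincide with the engine of the paper's proof (once $U$ is unitary, $T=U\widehat{T}U^{*}$, so the two operators are unitarily equivalent and complex symmetry transfers). But your plan for the general case rests on a false premise: when $U$ is not unitary, $\widehat{T}$ need not be unitarily equivalent to $T$ at all, so no unitary completion $\widetilde{U}$ and no "correction term supported on $\ker T$" can be made to work. Concretely, take $N=\bigl(\begin{smallmatrix}0&1\\0&0\end{smallmatrix}\bigr)$, which is binormal ($N^{*}N$ and $NN^{*}$ are both diagonal) and complex symmetric. Here $|N|=\operatorname{diag}(0,1)$, $U=N$, and $\widehat{N}=|N|U=0$, while any unitary completion gives $\widetilde{U}^{*}N\widetilde{U}$ unitarily equivalent to $N\neq 0$ (indeed $\widetilde{U}^{*}N\widetilde{U}=N^{*}$ for the obvious completion). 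Since $\widehat{N}$ and $N$ do not even have the same norm or rank, the intertwining $C'\widehat{T}C'=\widehat{T}^{*}$ with $C'=\widetilde{U}^{*}C\widetilde{U}$ cannot be recovered by bookkeeping on the kernels; the defect you flagged as "requiring the most care" is genuinely fatal to this route. The paper avoids the issue entirely in this direction: it cites Garcia's theorem \cite{GarciaAluthge} that the Duggal transform of \emph{any} complex symmetric operator is complex symmetric (a direct construction of the conjugation via the $T=CJ|T|$ factorization of \cite{GarciaPutinar2}), with no binormality and no injectivity hypothesis.

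Your converse also has a gap where you declare the roles of $T=U|T|$ and $\widehat{T}=|T|U$ "interchangeable": the factorization $|T|U$ is not a polar decomposition of $\widehat{T}$ (in general $|T|\neq|\widehat{T}|$ and $U$ is not the partial isometry of $\widehat{T}$), so the symmetric construction is not available. Identifying the polar data of $\widehat{T}$ is precisely where binormality enters the paper's proof: by Saji's theorem \cite{Saji}, binormality of $T$ gives $\widehat{T}=(U^{*}UU)\,|\widehat{T}|$ as a polar decomposition; complex symmetry of $\widehat{T}$ then forces (via \cite{GarciaPutinar2}) the partial isometry $U^{*}UU$ to be unitary, which rules out $U$ being a proper partial isometry, so $U$ itself is unitary --- and only then does your unitary-equivalence computation $T=U\widehat{T}U^{*}$ finish the argument. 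Note also that in this direction the conjugation lives on $\widehat{T}$, so the kernel-dimension matching complex symmetry supplies is $\dim\ker\widehat{T}=\dim\ker\widehat{T}^{*}$, not the equality $\dim\ker T=\dim\ker T^{*}$ your completion of $U$ would need. In short: in the converse the paper shows the non-unitary case is vacuous rather than repairable, and in the forward direction it is handled by a construction that does not proceed through unitary equivalence.
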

        \begin{proof} 
        Suppose $T$ is complex symmetric. Then, $\widehat{T}$ is also complex symmetric, according to \cite[Theorem 1]{GarciaAluthge}. 
        
        Suppose $\widehat{T}$ is complex symmetric for a conjugation $C$. Then, since $T$ is binormal, the polar decomposition of $\widehat{T}$ is $\widehat{T} = \widehat{U} |\widehat{T}|$ \cite[Theorem 3.24]{Saji}. Since $\widehat{T}$ is complex symmetric, $\widehat{U} = U^*UU$ is unitary \cite[Corollary 1]{GarciaPutinar2}. If $U$ is a proper partial isometry, then $U^*U$ is a proper projection and $U^*UU$ cannot be unitary. Therefore, $U$ is a full isometry, meaning that $U^*U = I$ and so $U^*UU = U$. This justifies that $U$ is unitary, and so $\widehat{T}U^{*} =|T|$. Then we have $T = U|T| = U\widehat{T}U^{*} $ and therefore $T$ and $\widehat{T}$ are unitarily equivalent. Therefore, $T$ is also complex symmetric. \end{proof}
        
    \begin{example} Recall the operator \C\ on \Ht\ induced by an involutive disk automorphism \ph, as in Example \ref{compop}. In \cite{Noor1}, it is established that the polar decomposition of \C\ is given by $\C = U|\C|$, where $U = \Cs |\C|$ is self-adjoint and unitary. By Theorem \ref{duggal}, $\widehat{\C} = |\C| \Cs |\C|$ is complex symmetric. This can also be seen by the fact that $\widehat{\C}$ is again involutive.    \end{example}

    Theorem \ref{duggal} is a nice characterization, but it proves something much stronger than the preservation of complex symmetry. If $T$ is binormal and either $T$ or $\widehat{T}$ is complex symmetric, then $T$ and $\widehat{T}$ are unitarily equivalent. Therefore, $\widehat{T}$ may be too similar to $T$ to be useful. Instead, we now turn to the Aluthge transform. In \cite{GarciaAluthge}, we find that $\tilde{T}$ and $T^2$ are connected by the fact the kernel of the Aluthge transform is exactly the set of operators which are nilpotent of order two. The connection between $\tilde{T}$ and $T^2$ is highlighted again in our next theorem. Our following proofs still involve the Duggal transform and the following preliminary.
    
    \begin{lemma} \label{TTcommutation} An operator $T = U|T|$, with $U$ unitary, is binormal if and only if $|T|$ and $|\widehat{T}|$ commute. \end{lemma}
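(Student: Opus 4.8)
The plan is to reduce the binormality condition to a commutation relation between two positive operators and then identify that relation with the commutation of $|T|$ and $|\widehat{T}|$. First I would compute $|\widehat{T}|$ explicitly. Since $\widehat{T} = |T|U$ and $U$ is unitary, we have $\widehat{T}^*\widehat{T} = U^*|T|^2U$, and because conjugation by a unitary commutes with the continuous functional calculus, taking positive square roots gives $|\widehat{T}| = U^*|T|U$. At the same time $T^*T = |T|^2$ and $TT^* = U|T|^2U^*$, so by definition $T$ is binormal precisely when $|T|^2$ commutes with $U|T|^2U^* = (U|T|U^*)^2$.

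The key analytic input is the standard fact that, for positive operators, commuting of the squares is equivalent to commuting of the operators themselves: if $P \ge 0$ and $PS = SP$, then $f(P)S = Sf(P)$ for every continuous $f$, and in particular $P^2$ commutes with $S$ if and only if $P = (P^2)^{1/2}$ does. I would apply this twice: once with the positive operator $|T|$ and $S = (U|T|U^*)^2$, and once with the positive operator $U|T|U^*$ and $S = |T|$. Chaining the two equivalences shows that $|T|^2$ commutes with $(U|T|U^*)^2$ if and only if $|T|$ commutes with $U|T|U^*$. Thus binormality of $T$ is equivalent to $|T|\,(U|T|U^*) = (U|T|U^*)\,|T|$.

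Finally I would convert this into a statement about $U^*|T|U = |\widehat{T}|$. Multiplying the relation $|T|(U|T|U^*) = (U|T|U^*)|T|$ on the left by $U^*$ and on the right by $U$, and using $U^*U = UU^* = I$, yields $(U^*|T|U)\,|T| = |T|\,(U^*|T|U)$. Since $U$ is unitary this passage is reversible, so $|T|$ commutes with $U|T|U^*$ if and only if $|T|$ commutes with $U^*|T|U = |\widehat{T}|$, which completes the equivalence.

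I expect the only delicate point to be the square-root fact used in the second paragraph: one must be sure that commuting of the squares genuinely forces the positive operators themselves to commute, which is exactly where positivity is essential, since it guarantees that the square root is the unique positive one and is given by the continuous functional calculus. The unitarity of $U$ is the other load-bearing hypothesis, entering both in the computation of $|\widehat{T}|$ and in the reversibility of the final conjugation.
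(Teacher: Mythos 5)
Your proof is correct, and it takes a genuinely different route from the paper's. The paper leans on an external reference: it invokes Saji's result that $T$ is binormal if and only if its Duggal transform $\widehat{T}$ is (equivalently $[\,|\widehat{T}|,|\widehat{T}^*|\,]=0$), and then reduces that condition to $[\,|\widehat{T}|,|T|\,]=0$ via the single computation $|\widehat{T}^*| = |(|T|U)^*| = \left(|T|UU^*|T|\right)^{1/2} = |T|$. You instead work entirely from scratch: you compute $|\widehat{T}| = U^*|T|U$, rewrite binormality as $[\,|T|^2,(U|T|U^*)^2\,]=0$, strip off both squares using the functional-calculus fact that an operator commutes with a positive operator if and only if it commutes with its square (applied once to $|T|$ and once to $U|T|U^*$), and finish with a reversible unitary conjugation. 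Your two applications of the square-root lemma are sound --- in each case the operator being square-rooted is positive, which is exactly what the unique-positive-root argument requires, and the direction where squares commute forces the operators to commute only needs positivity of the operator whose root is taken, not of the other factor. What each approach buys: the paper's proof is shorter given the citation, but it inherits whatever hypotheses Saji's theorem carries and leaves the mechanism opaque; your proof is self-contained, makes the load-bearing role of unitarity explicit (in the formula for $|\widehat{T}|$ and in the reversibility of the final conjugation), and in effect reproves the relevant special case of Saji's equivalence along the way. One small remark: your identity $|\widehat{T}| = U^*|T|U$ and the paper's $|\widehat{T}^*| = |T|$ are two halves of the same picture, and conjugating your final commutation relation by $U$ recovers the paper's formulation exactly.
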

        \begin{proof}
        Suppose $T$ such that $[|\widehat{T}|,|T|]=0$. By the proof of \cite[Theorem 3.2.9]{Saji}, $T$ is binormal if and only if $\widehat{T}$ is binormal, or, equivalently, $[|\widehat{T}|,|\widehat{T}^*|]=0$. Now, $|\widehat{T}^*|=|(|T|U)^*|=|U^*|T||=\left(|T|UU^*|T| \right )^{1/2}=|T|$. Therefore, $[|\widehat{T}|,|\widehat{T}^*|]=[|\widehat{T}|,|T|]=0$, and so $T$ is binormal.
        
        Suppose $T$ is binormal. As before, by \cite{Saji}, $\widehat{T}$ is binormal: $[|\widehat{T}|,|\widehat{T}^*|]=0$. Since $|\widehat{T}^*|=|T|$, $0=[|\widehat{T}|,|\widehat{T}^*|]=[|\widehat{T}|,|T|]$, as desired.
        \end{proof}
    
    \begin{theorem}\label{binsquarecso} Suppose $T$ is binormal. If $\tilde{T}$ is complex symmetric, then $T^2$ is complex symmetric. \end{theorem}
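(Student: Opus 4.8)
The plan is to work with the polar decomposition $T = U|T|$, treating first the case where $U$ is unitary (the general partial-isometry case is the main obstacle, discussed at the end). Since $T$ is binormal, $|T|$ and $|T^*|$ commute; writing $P = |T|$ and $R = |T^*| = UPU^*$, binormality is exactly the relation $[P,R]=0$. With this commutation in hand I would record two computational identities. First, using $UP^{1/2}U^* = R^{1/2}$ one gets $\tilde{T} = P^{1/2}UP^{1/2} = (PR)^{1/2}U = WU$, where $W := (PR)^{1/2}$; consequently $|\tilde{T}^*| = W$, $|\tilde{T}| = U^*WU$, and the partial isometry in the polar decomposition of $\tilde{T}$ is again $U$. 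Second, a direct computation using $UP = RU$ gives $T^2 = U(PR)U = UW^2U$, so that $T^{*2} = U^*W^2U^*$.

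Next I would extract the consequences of $\tilde{T}$ being complex symmetric. If $C\tilde{T}C = \tilde{T}^*$, then the standard modulus identity for complex symmetric operators gives $C|\tilde{T}|C = |\tilde{T}^*|$. Feeding the formulas above into $C\tilde{T}C = \tilde{T}^*$ and cancelling the now-known modulus yields $(CUC)W = U^*W$; when $W$ is injective this forces the clean relation $CUC = U^*$, and then the modulus identity upgrades to $CWC = U^*WU$, hence $CW^2C = U^*W^2U$.

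With these relations the conjugation for $T^2$ can be written down explicitly: set $J := CU^*$. The relation $CUC = U^*$ (equivalently $U^*C = CU$) makes $J$ an antilinear, isometric involution, and for any linear $X$ one checks that $JXJ = U(CXC)U^*$. Applying this to $X = T^2 = UW^2U$ and substituting $CUC = U^*$ and $CW^2C = U^*W^2U$ gives $CT^2C = U^{*2}W^2$, and therefore $JT^2J = U(U^{*2}W^2)U^* = U^*W^2U^* = T^{*2}$. Thus $T^2$ is complex symmetric with conjugation $J = CU^*$.

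The step I expect to be the genuine obstacle is the reduction to $U$ unitary together with the injectivity of $W$: for a general binormal $T$ the polar factor $U$ is only a partial isometry, the identities $R = UPU^*$ and $UP^{1/2}U^* = R^{1/2}$ then require care on $\ker T$ and $\ker T^*$, and the passage from $(CUC)W = U^*W$ to $CUC = U^*$ breaks down where $W$ is not injective. I would handle this by splitting off the kernel and cokernel of $T$ and verifying that the complex symmetric structure of $\tilde{T}$ restricts compatibly, or by first assuming $T$ invertible and arguing that the conclusion persists in the appropriate limit. Making this rigorous, rather than the algebra above, is where the real work lies.
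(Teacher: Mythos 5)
The algebra you carry out in the restricted case ($U$ unitary, $W$ injective) is correct, and it is essentially the paper's computation in different notation: since $\tilde{T} = WU$, your identity $T^2 = UW^2U$ is the paper's $T^2 = U\tilde{T}U^*\tilde{T}$; binormality enters both arguments at this stage only through commutation of moduli (your $[P,R]=0$ is, after conjugating by $U$, the paper's lemma that $|T|$ and $|\widehat{T}|$ commute); and your conjugation $J = CU^* = UC$ is literally the same operator as the paper's $CJ'C$, where $U = CJ'$ is the Garcia--Putinar factorization of the unitary polar factor. So the completed portion is sound.

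The genuine gap is the reduction you defer to your last paragraph, and both of your proposed repairs are off target. The missing idea is that the hypotheses already force $U$ to be unitary, so there is no ``general partial-isometry case'' to split off: because $T$ is binormal, the polar decomposition of the Aluthge transform is $\tilde{T} = (U^*UU)|\tilde{T}|$ \cite[Theorem 3.2.4]{Saji}, and since $\tilde{T}$ is complex symmetric the partial isometry in its polar decomposition is unitary \cite[Corollary 1]{GarciaPutinar2}; if $U$ were a proper partial isometry, $U^*U$ would be a proper projection and $U^*UU$ could not be unitary, so $U^*U = I$ and $U = U^*UU$ is unitary. This is also where binormality does work beyond $[P,R]=0$ --- without Saji's identification of the polar factor of $\tilde{T}$, complex symmetry of $\tilde{T}$ says nothing about $U$, and the theorem is false without binormality, as the paper's final example in Section \ref{bin to cso} shows. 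Once the canonical polar factor (the one with $\ker U = \ker T$) is unitary, $T$ is injective with dense range, so $P$, $R$, and hence $W = (PR)^{1/2}$ are automatically injective and your cancellation $(CUC)W = U^*W$ giving $CUC = U^*$ is legitimate; alternatively, one avoids injectivity altogether, as the paper does, by quoting Garcia--Putinar's theorem that $U = CJ'$ for a conjugation $J'$, which yields $CUC = U^*$ with no cancellation at all. Finally, your fallback of assuming $T$ invertible and arguing that ``the conclusion persists in the appropriate limit'' is unsound as stated: the class of complex symmetric operators is not closed in the operator-norm topology on an infinite-dimensional Hilbert space, so complex symmetry of the approximants' squares need not pass to $T^2$, and you have no control over the conjugations along the approximation.
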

    
        \begin{proof} Let $T$ be binormal and $\tilde{T}$ be a complex symmetric operator, and let $T=U|T|$ be the polar decomposition of $T$. Our first step is to show that $U$ is unitary; we proceed as in Theorem \ref{duggal}. By \cite[Theorem 3.2.4]{Saji}, $\tilde{T}=U^*UU|\tilde{T}|$ is the polar decomposition of $\tilde{T}$. Since $\tilde{T}$ is complex symmetric, we conclude by the same reasoning as before that $U^*UU = U$ and $U$ is unitary. Therefore, $\tilde{T}=U|\tilde{T}|$, so $U^{*}\tilde{T} = |\tilde{T}|$. Then:
        \begin{align*}
            U^*\tilde{T}U^*\tilde{T} &= | \tilde{T}|^2 \\
            &= \tilde{T}^* \tilde{T} \\
            &= |T|^{1/2}U^*|T|U|T|^{1/2} \\
            &= |T|^{1/2}U^* \widehat{T}|T|^{1/2} \\
            &= |T|^{1/2}| \widehat{T}||T|^{1/2} \\
            &= | \widehat{T}||T|,
        \end{align*}
        
        where the last step is performed via Lemma \ref{TTcommutation}. Continuing:
        \begin{align*}
            U^*\tilde{T}U^*\tilde{T}&=| \widehat{T}||T| \\
            &=U^*\widehat{T}U^*T \\
            &=U^*|T|T \\
            &=U^{*2}T^2.
        \end{align*}
        Finally, letting $U^2$ act on the left side of the first and last expressions, we find $U\tilde{T}U^*\tilde{T}=T^2$.
        
        Since $\tilde{T}$ is complex symmetric, for some conjugation $C$, $C\tilde{T}C=\tilde{T}^*$. Further, for some conjugation $J$, $U=CJ$ \cite[Theorem 2]{GarciaPutinar2}. Then, $U\tilde{T}U^*\tilde{T}=CJ\tilde{T}JC\tilde{T}$, and 
        \begin{align*}
            (U\tilde{T}U^*\tilde{T})^* &=\tilde{T}^*U\tilde{T}^*U^* \\ 
            &=\tilde{T}^*CJ\tilde{T}^*JC \\
            &=\tilde{T}^*CJ\tilde{T}^*JC \\
            &=C\tilde{T}JC\tilde{T}CJC.
        \end{align*}
        
        Letting $CJC$ act on both sides of $U\tilde{T}U^*\tilde{T}$: 
        \begin{align}
            CJC(U\tilde{T}U^*\tilde{T})CJC&=CJC(CJ\tilde{T}JC\tilde{T})CJC \nonumber\\
            &=C\tilde{T}JC\tilde{T}CJC \nonumber\\
            &=(U\tilde{T}U^*\tilde{T})^*. \nonumber
        \end{align}
        Thus, $U\tilde{T}U^*\tilde{T} = T^2$ is complex symmetric. Therefore, the complex symmetry of $\tilde{T}$ implies the complex symmetry of $T^2$.
        \end{proof}
    
    Note the following about the proof of Theorem \ref{binsquarecso}:
    \begin{enumerate}
        \item Theorem \ref{binsquarecso} is not biconditional. There are binormal operators with complex symmetric squares, but whose Aluthge transforms are not complex symmetric.
        \item Neither the complex symmetry of an operator's square nor of its Aluthge transform guarantee complex symmetry for the original operator.
        \item In \cite[Example 3.2.7]{Saji}, Saji exhibits a binormal operator where $\tilde{T}$ is not binormal. It is tempting to think that since $\tilde{T}$ preserves complex symmetry, it might also preserve binormality. Unfortunately, this is not the case--even if $T$ possesses both complex symmetry and binormality.
        \item Theorem \ref{binsquarecso} is false without the condition of binormality.
    \end{enumerate}
    These facts are demonstrated in the following examples.
    
    \begin{example} The operator $T=\begin{bmatrix}2 & 2 & -2 & 0\\ 0 & 0 & 0 & -1\\ 2 & -2 & -2 & 0\\ 1 & 0 & 1 & 0 \end{bmatrix}$ is binormal, and its square is complex symmetric, but $\tilde{T}$ is not--all of these by by the Modulus Test \cite{GarciaPoore}.
    \end{example}
    \begin{example} The operator $T=\begin{bmatrix} -2 & -1 & 2 & 2\\ 1 & 0 & 0 & 2\\ 0 & -2 & 2 & -1\\ 0 & -2 & -1 & 0 \end{bmatrix}$ is binormal and not complex symmetric, but both $T^2$ and $\tilde{T}$ are complex symmetric--all of these by the Modulus Test.
    \end{example}

    \begin{example}\label{uncentered} 
    The matrix $T=\begin{bmatrix}
    0 & 1 & 1\\ 
    0 & 1 & -1\\ 
    1 & 0 & 0
    \end{bmatrix}$ is binormal, and complex symmetric by the Strong Angle Test. However, $\tilde{T}$ is not binormal.
    \end{example}
    
    \begin{example} 
    The matrix $T=\begin{bmatrix}
    -1 & -1 & -1\\ 
    0 & -1 & -1\\ 
    1 & -1 & -1
    \end{bmatrix}$ is not binormal and $\tilde{T}$ is complex symmetric, but $T^2$ is not complex symmetric--all by the Strong Angle Test.
    \end{example}

\section{Properties of binormal, complex symmetric operators} \label{bin and cso}

In this section, we determine a few properties of binormal, complex symmetric operators. First, we characterize such finite-dimensional matrices  with distinct singular values. We then give results on a variety of other operator properties as they relate to binormality and complex symmetry.

\subsection{Matrices}
    
    \begin{theorem}  A binormal complex symmetric $n \times n$ matrix $T$ with $n$ distinct singular values is an involutive weighted permutation. \end{theorem}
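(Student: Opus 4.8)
The plan is to extract the rigid structure of $T$ from the two hypotheses in turn: binormality together with a simple singular spectrum will force $T$ to be a weighted permutation in a suitable orthonormal basis, and complex symmetry will then force the underlying permutation to be an involution. First recall that, by the functional calculus for positive operators, $T^*T$ and $TT^*$ commute if and only if $|T|$ and $|T^*|$ commute, where $|T^*| = U|T|U^*$ for the polar decomposition $T = U|T|$ (in finite dimensions $U$ may be taken unitary; should $0$ be a singular value one extends the partial isometry, which affects nothing below). Since the $n$ singular values are distinct, $|T|$ has simple spectrum, so its unit eigenvectors $e_1,\dots,e_n$ form an orthonormal basis uniquely determined up to phases. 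Because $|T|$ and $|T^*|$ are commuting Hermitian matrices and $|T|$ has simple spectrum, the standard fact about simultaneous diagonalization shows $\{e_i\}$ is also an eigenbasis for $|T^*|$.

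The next step is to show that $U$ is monomial in this basis. Since $|T^*| = U|T|U^*$, the vector $Ue_i$ is an eigenvector of $|T^*|$ for the eigenvalue $\sigma_i$; as $|T^*|$ has simple spectrum and $\{e_j\}$ is also one of its eigenbases, $Ue_i$ must be a unit scalar multiple of the unique $e_{\tau(i)}$ carrying that same eigenvalue. Hence $Ue_i = \omega_i e_{\tau(i)}$ with $|\omega_i| = 1$ and $\tau \in S_n$, so that $Te_i = \sigma_i \omega_i e_{\tau(i)}$. In the basis $\{e_i\}$, then, $T$ is a weighted permutation matrix whose permutation is $\tau$.

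Finally I would invoke complex symmetry. From $CTC = T^*$ and $C^2 = I$ one obtains $C(T^*T)C = TT^*$, and taking positive square roots (using $Cf(A)C = f(CAC)$ for positive $A$ and real continuous $f$) yields $C|T|C = |T^*|$. A short computation from $Ue_i = \omega_i e_{\tau(i)}$ gives $|T^*|e_m = \sigma_{\tau^{-1}(m)} e_m$. Now for each $k$, $|T^*|(Ce_k) = C|T|e_k = \sigma_k\, Ce_k$, so $Ce_k$ is an eigenvector of $|T^*|$ for eigenvalue $\sigma_k$; matching eigenvalues forces $Ce_k = \mu_k e_{\tau(k)}$ with $|\mu_k| = 1$. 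Applying $C$ once more, $e_k = C^2 e_k = \overline{\mu_k}\,\mu_{\tau(k)}\, e_{\tau^2(k)}$, which compels $\tau^2(k) = k$ for every $k$. Thus $\tau$ is an involution, and $T$ is an involutive weighted permutation.

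I expect the main friction to be in the second step: converting ``commutes with a simple-spectrum Hermitian'' into the phased-permutation action of $U$ on the eigenbasis, and carefully verifying $|T^*|e_m = \sigma_{\tau^{-1}(m)} e_m$ so that the \emph{same} permutation $\tau$ governs both the weighted-permutation form of $T$ and the action of $C$; if a different permutation appeared in the two places, the involution conclusion would not immediately follow. I would also flag the basis subtlety: the statement is most naturally read as ``$T$ is an involutive weighted permutation with respect to the orthonormal eigenbasis of $|T|$,'' i.e. $T$ is unitarily equivalent to such a matrix, since being a weighted permutation is not preserved under arbitrary unitary change of basis.
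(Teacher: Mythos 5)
Your proof is correct, and it takes a genuinely different, more self-contained route than the paper's. The paper outsources the first half entirely, citing Campbell (\cite[Theorem 9]{Campbell2}) for the fact that a binormal matrix with distinct singular values is a weighted permutation; you instead prove this from scratch, observing that binormality is equivalent to $[|T|,|T^*|]=0$, that simplicity of the spectrum of $|T|$ forces a common orthonormal eigenbasis $\{e_i\}$, and that $|T^*|=U|T|U^*$ then forces $U$ to act as a phased permutation $\tau$ on that basis. For the involutivity, the paper argues by contradiction: it pairs the eigenvectors $\{U_i\}$ of $TT^*$ with the eigenvectors $\{V_i\}$ of $T^*T$ via a permutation $\Lambda$, extracts the longest disjoint cycle, and shows any cycle of length at least $3$ produces $\abs{\langle U_{q_1},V_{q_2}\rangle}\neq\abs{\langle U_{q_2},V_{q_1}\rangle}$, violating the Modulus Test of \cite{GarciaPoore}. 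You avoid that machinery entirely: from $C|T|^2C=|T^*|^2$ and uniqueness of positive square roots you get $C|T|C=|T^*|$, hence $Ce_k=\mu_k e_{\tau(k)}$, and $C^2=I$ gives $\tau^2=\mathrm{id}$ with no case analysis on cycle lengths. What the paper's route buys is brevity, given the two cited results; what yours buys is elementarity (no appeal to UECSM test machinery), an explicit verification that the \emph{same} permutation $\tau$ governs both $U$ and $C$ --- precisely the point you flagged as the crux, and one the paper glosses by identifying its $\Lambda$ with the permutation underlying $T$ without comment --- plus a little extra information, namely the phase relation $\overline{\mu_k}\,\mu_{\tau(k)}=1$. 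Your closing caveats (any unitary extension of the partial isometry works when $0$ is a singular value, since it must carry $\ker|T|$ onto $\ker|T^*|$; and ``weighted permutation'' should be read relative to the eigenbasis of $|T|$, i.e.\ up to unitary equivalence) are both sound and are left implicit in the paper.
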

    \begin{proof}
        By Campbell, $T$ is a weighted permutation \cite[Theorem 9]{Campbell2}. Suppose $T$ is not involutive. Since $T$ is binormal, $T^*T$ and $TT^*$ share the same set of eigenvectors. Since $T$ has distinct singular values, $TT^*$ and $T^*T$ have the same set of eigenvalues as well. However, the eigenvector/eigenvalue pairing of $TT^*$ is potentially rearranged in $T^*T$. Further, all eigenvectors of $TT^*$ and $T^*T$ are mutually orthogonal since both operators are Hermitian. Let $\{ U_i \}$ and $\{ V_i \}$ be the ordered sets of eigenvectors of $TT^*$ and $T^*T$ respectively, so that for eigenvalue $\lambda$ there is an index $i$ such that $TT^*U_i=\lambda U_i$ and $T^*TV_i=\lambda V_i$.
        
        Because the eigenvectors are orthogonal, we can write $\left\langle U_i,V_j\right\rangle =\delta _{i,\Lambda(j) }$ where $\delta_{a,b}$ is the usual Kronecker delta: $1$ if its arguments are identical, $0$ otherwise; and $\Lambda$ is the map which takes the indices of $V$ to the indices of $U$. That is, $U_i=V_{\Lambda(i)}$ or $TT^*V_{\Lambda(i)}=T^*TV_i$.
        
        Let $\{ q_i \}_{i=1}^{n} $ be the indices of the longest disjoint cycle in the permutation $\Lambda$ with length $n$, so that $U_{q_1}=V_{q_n},\ U_{q_2}=V_{q_1},\cdots \ U_{q_n}=V_{q_{n-1}}$. Then, $\Lambda(q_i)=q_{i\%(n+1)}$. 
        
        If $n=1$ or $n=2$, then $\lambda$ is involutive, and so is $T$, which is a contradiction.
        
        If $n\geq 3$ then consider $\left\langle U_{q_1},V_{q_2}\right\rangle$ and $\left\langle U_{q_2},V_{q_1}\right\rangle$. Then, $\left\langle U_{q_1},V_{q_2}\right\rangle =\delta _{q_1,\Lambda(q_2)}=\delta _{q_1,q_3}=0$, but $\left\langle U_{q_2},V_{q_1}\right\rangle =\delta _{q_2,\Lambda(q_1)}=\delta _{q_2,q_2}=1$. Since the inner products of $U$ and $V$ are not equal under a change of indices, the Modulus Test gives that T is not a complex symmetric operator which is a contradiction. Thus, $T$ is involutive.
    \end{proof}
\subsection{Paranormal operators}    
    In \cite{Campbell2}, Campbell remarks that a hyponormal operator is normal if and only if its square is normal. We can now weaken that hypothesis from hyponormality to paranormality, using the lemmas below.
    
    \begin{lemma}\label{campbell} A binormal operator is hyponormal if and only if it is paranormal. \end{lemma}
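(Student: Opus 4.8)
The forward implication is immediate and does not even use binormality: if $T$ is hyponormal, then for any unit vector $x$ we have $\|Tx\|^2 = \langle T^*Tx,x\rangle \le \|T^*Tx\| \le \|T(Tx)\| = \|T^2x\|$, where the first inequality is Cauchy--Schwarz and the second is hyponormality applied to the vector $Tx$. Hence $T$ is paranormal. The real content of the lemma is the reverse implication, so the plan is to assume $T$ is binormal and paranormal, with polar decomposition $T = U|T|$, and deduce hyponormality. I would invoke the standard characterization that $T$ is paranormal if and only if $T^{*2}T^2 - 2k\,T^*T + k^2 I \ge 0$ for every $k>0$, which follows by applying the arithmetic--geometric mean inequality to $\|T^2x\|\,\|x\|$ and optimizing in $k$.

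The idea is to turn this inequality into a comparison of the moduli $|T|$ and $|\widehat{T}|$. First I would record that $\widehat{T}^*\widehat{T} = U^*|T|^2U = |\widehat{T}|^2$, and that $|T|^2$ and $|\widehat{T}|^2$ commute: this is Lemma \ref{TTcommutation} when $U$ is unitary, and in general it follows directly from binormality, since $T^*T$ commuting with $TT^* = U|T|^2U^*$ forces $T^*T$ to commute with $U^*(T^*T)U = |\widehat{T}|^2$. A direct computation then gives $T^{*2}T^2 = |T|\,(U^*|T|^2U)\,|T| = |T|\,|\widehat{T}|^2\,|T| = |T|^2|\widehat{T}|^2$, a product of two commuting positive operators. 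Substituting into the paranormality inequality yields $|T|^2|\widehat{T}|^2 - 2k|T|^2 + k^2 I \ge 0$ for all $k>0$. Passing to the joint spectral representation of the commuting pair $(|T|^2,|\widehat{T}|^2)$, this becomes the scalar inequality $sw - 2ks + k^2 \ge 0$ for all $k>0$ at every joint spectral point $(s,w)$ with $s,w \ge 0$; minimizing the quadratic in $k$ at $k=s$ forces $s(w-s)\ge 0$, hence $w \ge s$ throughout, i.e. $|\widehat{T}|^2 \ge |T|^2$.

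It remains to pass from $|\widehat{T}|^2 = U^*(T^*T)U \ge T^*T$ to hyponormality $T^*T \ge TT^*$. Writing $A = T^*T$ and $Q = UU^*$ for the range projection of $TT^*$, conjugation of $U^*AU \ge A$ by $U$ gives $QAQ = U(U^*AU)U^* \ge UAU^* = TT^*$. Finally, since $T$ is binormal, $A$ commutes with $TT^*$ and hence with its range projection $Q$, so $A = QAQ + (I-Q)A(I-Q)$; using $TT^* = Q(TT^*)Q$ we obtain $T^*T - TT^* = (QAQ - TT^*) + (I-Q)A(I-Q) \ge 0$, and $T$ is hyponormal. I expect this last transfer to be the main obstacle: because $U$ is only a partial isometry when $T$ has nontrivial kernel, conjugation by $U$ does not on its own recover $T^*T$, and the bookkeeping around the kernel and cokernel of $T$ is delicate. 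The point that makes it work is precisely the commutation of $T^*T$ with the range projection of $TT^*$, a consequence of binormality, which splits $A$ into blocks along $Q$ and lets the compression inequality $QAQ \ge TT^*$ close the argument.
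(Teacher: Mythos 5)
Your proof is correct, but note that the paper does not actually prove Lemma \ref{campbell} at all: its ``proof'' is a one-line citation of \cite[Theorem 4]{Campbell2}, so you have supplied in full what the paper outsources, and what you have written is essentially a reconstruction of Campbell's own argument. The skeleton is exactly right: the easy direction via Cauchy--Schwarz and hyponormality at $Tx$; the Ando-type characterization $T^{*2}T^2 - 2kT^*T + k^2I \geq 0$ for all $k>0$; the identity $T^{*2}T^2 = |T|\,|\widehat{T}|^2\,|T| = |T|^2|\widehat{T}|^2$, which is where binormality enters; the scalar minimization at $k=s$ on the joint spectrum giving $U^*(T^*T)U \geq T^*T$; and the transfer back via the range projection $Q = UU^*$. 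Two steps you assert tersely do hold and deserve a line each if written out. First, the commutation $[T^*T,\,U^*(T^*T)U]=0$ in the general (non-unitary) case follows from the intertwining relations $(TT^*)U = U(T^*T)$ and $U^*(TT^*) = (T^*T)U^*$, which are immediate from the polar decomposition since $U^*U$ acts as the identity on $\overline{\mathrm{ran}}\,|T|$: writing $A = T^*T$ and $B = TT^*$, one gets $A(U^*AU) = U^*BAU = U^*ABU = (U^*AU)A$. This, incidentally, extends Lemma \ref{TTcommutation} beyond the unitary hypothesis the paper imposes there. Second, the commutation of $A$ with $Q$ holds because $Q$ is the strong limit of $f_n(TT^*)$ with $f_n(t) = \min(nt,1)$, so any operator commuting with $TT^*$ commutes with $Q$. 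With those two facts, your final decomposition $A = QAQ + (I-Q)A(I-Q)$ together with $TT^* = Q(TT^*)Q$ and the conjugated inequality $QAQ \geq TT^*$ closes the argument, and your careful bookkeeping around $\ker T$ is precisely what the unitary-$U$ shortcut used elsewhere in the paper (where complex symmetry forces $U$ unitary) would conceal; the lemma as stated makes no complex symmetry assumption, so your handling of the partial-isometry case is not optional but necessary.
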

    
    \begin{proof} The proof is found in \cite[Theorem 4]{Campbell2}. \end{proof}
    
    \begin{lemma}\label{paranormal} A binormal, complex symmetric operator $T$ is normal if and only if it is paranormal. \end{lemma}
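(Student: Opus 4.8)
The plan is to obtain this as an immediate corollary of the two results that precede it, namely Lemma \ref{campbell} (binormal plus hyponormal is equivalent to binormal plus paranormal) and Theorem \ref{hyponormal} (hyponormal plus complex symmetric implies normal). Since $T$ is assumed binormal and complex symmetric throughout, the only real content is translating between paranormality and hyponormality and then invoking Theorem \ref{hyponormal}.

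For the forward implication, I would argue that a normal operator is paranormal. This follows from the inheritance chain recorded in the preliminaries: normality implies quasinormality, which implies subnormality, which implies hyponormality; and every hyponormal operator is paranormal (paranormality being the weaker of the two conditions, defined by $\|Tx\|^2 \leq \|T^2x\|\,\|x\|$ for all $x$). Hence a normal $T$ is in particular paranormal, with no use of binormality or complex symmetry needed in this direction.

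For the reverse implication, I would start from the assumption that $T$ is binormal, complex symmetric, and paranormal. Because $T$ is binormal and paranormal, Lemma \ref{campbell} upgrades paranormality to hyponormality. Now $T$ is both hyponormal and complex symmetric, so Theorem \ref{hyponormal} forces $T$ to be normal. Combining the two directions gives the stated equivalence.

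I do not expect a serious obstacle here, since the argument is purely a chaining of previously established facts; the one point requiring a word of justification is the elementary inclusion \emph{hyponormal} $\Rightarrow$ \emph{paranormal} used in the forward direction, which should be stated explicitly so that the reader is not left to supply the definition of paranormality. The essential leverage is that binormality is precisely the hypothesis that allows Lemma \ref{campbell} to collapse the gap between the paranormal and hyponormal classes, after which complex symmetry does the rest via Theorem \ref{hyponormal}.
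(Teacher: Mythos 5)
Your proposal is correct and matches the paper's proof exactly: the nontrivial direction chains Lemma \ref{campbell} (binormal plus paranormal gives hyponormal) with Theorem \ref{hyponormal} (hyponormal plus complex symmetric gives normal), while the other direction is the standard inclusion of normal operators among paranormal ones, which the paper simply calls trivial. Your only addition is to spell out that trivial direction via the hyponormal-implies-paranormal inclusion, which is fine.
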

    
    \begin{proof} If $T$ is paranormal and binormal, it is hyponormal by Lemma \ref{campbell}. If $T$ is hyponormal and complex symmetric, it is normal by Theorem \ref{hyponormal}. The other direction is trivial. \end{proof}
    
    \begin{theorem} A paranormal operator $T$ is normal if and only if $T^2$ is normal. \end{theorem}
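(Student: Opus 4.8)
The plan is to prove the two implications separately, with the forward direction being essentially immediate and the reverse direction reducing to a chain of the results already in hand. For the forward implication, suppose $T$ is normal; then $T$ commutes with $T^*$, so $T^2$ commutes with $(T^2)^* = (T^*)^2$, and hence $T^2$ is normal. No use of paranormality is needed here, which is expected since normality is the strongest hypothesis in play.

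For the reverse implication, I would assume $T$ is paranormal and that $T^2$ is normal, and then assemble the earlier machinery. The key observation is that the normality of $T^2$ supplies \emph{both} of the structural hypotheses required by Lemma \ref{paranormal}: by Theorem \ref{square}, $T^2$ normal forces $T$ to be simultaneously binormal and complex symmetric. At this point $T$ is paranormal, binormal, and complex symmetric all at once, so Lemma \ref{paranormal}---which states that a binormal, complex symmetric operator is normal if and only if it is paranormal---applies directly and gives that $T$ is normal.

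I would also note the equivalent longer route, in case a more self-contained argument is preferred: once $T$ is known to be binormal, Lemma \ref{campbell} upgrades the paranormality of $T$ to hyponormality, and then, since $T$ is also complex symmetric, Theorem \ref{hyponormal} upgrades hyponormality to normality. Either path closes the argument.

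Honestly, there is no serious computational obstacle here; the content of the theorem lies entirely in the earlier results, and the proof is a matter of verifying that their hypotheses line up. The one point worth emphasizing---and the place where the argument could go wrong if stated carelessly---is that Lemma \ref{paranormal} presupposes binormality and complex symmetry, neither of which is assumed in the theorem's statement. The role of Theorem \ref{square} is precisely to manufacture both of these from the single hypothesis that $T^2$ is normal, so I would make sure to invoke it before appealing to Lemma \ref{paranormal}.
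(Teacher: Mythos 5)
Your proposal is correct and follows essentially the same route as the paper: Theorem \ref{square} converts the normality of $T^2$ into binormality and complex symmetry of $T$, after which Lemma \ref{paranormal} (equivalently, Lemma \ref{campbell} followed by Theorem \ref{hyponormal}, which is exactly how the paper phrases it) yields normality, the forward direction being trivial.
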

    
    \begin{proof} Suppose $T^2$ is normal. By Theorem \ref{square}, $T$ is binormal and complex symmetric. Since $T$ is paranormal, it is hyponormal by Lemma \ref{campbell}. Then, $T$ is normal by Lemma \ref{paranormal}. The other direction is trivial. \end{proof}
\subsection{Square-positive semi-definite operators}    
    A point made to initiate the study of binormal complex symmetric operators is the connection to square-normal operators, as Theorem \ref{binsquarecso} notes. In this section, we use binormality and complex symmetry to get an exact characterization of square-positive semi-definite operators. (Recall $T$ is \textit{positive semi-definite} if $\langle Tx, x\rangle \geq 0 \textrm{ }\forall x$, or equivalently, $T = S^*S$ for some $S$.)
    
    To get there, we begin with the following lemma about binormal, complex symmetric operators in general.
    
    \begin{lemma}\label{u2} Suppose $T=U|T|$ is binormal and complex symmetric. Then the polar decomposition of $T^2$ is $T^2 = U^2|T^2|$, and $|T^2| = |T||\widehat{T}| = |\widehat{T}||T|$. \end{lemma}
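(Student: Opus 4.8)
The plan is to first reduce to the case where $U$ is unitary, then leverage binormality through Lemma~\ref{TTcommutation} to force the two moduli to commute, and finally read off both assertions from a single computation of $(T^2)^*T^2$. As a preliminary step, since $T$ is complex symmetric, the partial isometry $U$ in its polar decomposition may be taken unitary, exactly as argued in the proofs of Theorems~\ref{duggal} and~\ref{binsquarecso} via \cite[Corollary 1]{GarciaPutinar2}. With $U$ unitary, Lemma~\ref{TTcommutation} applies and yields $[|T|,|\widehat{T}|]=0$, which will be the workhorse of the argument.

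Next I would compute the modulus of $T^2$. Writing $T=U|T|$ and $T^*=|T|U^*$ with $U$ unitary, one expands $(T^2)^*T^2 = |T|U^*|T|^2U|T|$. Recognizing that $|\widehat{T}|^2 = \widehat{T}^*\widehat{T} = U^*|T|^2U$ (again using that $U$ is unitary), this becomes $|T^2|^2 = |T|\,|\widehat{T}|^2\,|T|$. Now I invoke $[|T|,|\widehat{T}|]=0$ to rewrite the right-hand side as $(|T|\,|\widehat{T}|)^2$. Since $|T|$ and $|\widehat{T}|$ are commuting positive operators, their product $|T|\,|\widehat{T}|$ is itself positive; by uniqueness of the positive square root we conclude $|T^2| = |T|\,|\widehat{T}|$, and commutativity immediately gives the second equality $|T|\,|\widehat{T}| = |\widehat{T}|\,|T|$.

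For the polar decomposition claim, I would verify the identity $U^2|T^2| = T^2$ directly. Using $|T^2| = |\widehat{T}|\,|T| = (U^*|T|U)\,|T|$, one computes $U^2(U^*|T|U)|T| = U|T|U|T| = T^2$. Since $U$ is unitary, $U^2$ is unitary and $|T^2|$ is by construction the modulus of $T^2$, so $T^2 = U^2|T^2|$ is the polar decomposition of $T^2$.

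The main obstacle is really the positivity step: the equality $|T^2|^2 = |T|\,|\widehat{T}|^2\,|T|$ holds for any $T$ whose polar partial isometry is unitary, but without commutativity one can neither identify the right-hand side with $(|T|\,|\widehat{T}|)^2$ nor guarantee that $|T|\,|\widehat{T}|$ is positive---both of which are needed to apply uniqueness of the positive square root. This is precisely where binormality enters, through Lemma~\ref{TTcommutation}; once commutativity is in hand, the remaining manipulations are routine.
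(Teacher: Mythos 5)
Your proof is correct and follows essentially the same route as the paper: unitarity of $U$ from complex symmetry, commutativity of $|T|$ and $|\widehat{T}|$ via Lemma~\ref{TTcommutation}, and identification of the two factors by a uniqueness argument. The only cosmetic difference is that you extract $|T^2| = |T||\widehat{T}|$ by computing $(T^2)^*T^2$ and invoking uniqueness of positive square roots, then verify $T^2 = U^2|T^2|$ directly using $|\widehat{T}| = U^*|T|U$, whereas the paper factors $T^2 = U|T|U|T| = U^2|\widehat{T}||T|$ in a single computation and appeals to uniqueness of the polar decomposition.
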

    \begin{proof} Since $T$ is complex symmetric, $U$ is unitary. Recall from Section \ref{bin to cso} that for a binormal, complex symmetric operator, the Duggal transform $\widehat{T} = |T|U$ has the polar decomposition $\widehat{T} = U|\widehat{T}|$. Then 
    $$T^2 = TT = U|T|U|T| = U\widehat{T}|T| = U^2 |\widehat{T}||T|$$ And by Lemma \ref{TTcommutation}, $|\widehat{T}|$ and $|T|$ commute, and the product of positive semi-definite operators is positive semi-definite (and $U^2$ is unitary since $U$ is). Thus we have written $T^2$ as a unitary operator and a positive semi-definite operator, and since the polar decomposition is unique, we have $U^2$ as the unitary piece and $|\widehat{T}||T| = |T||\widehat{T}| = |T^2|$. \\ \end{proof} 
    
    \begin{theorem} For a bounded operator $T = U|T|$ on a Hilbert space, the following are equivalent. 
    \begin{enumerate}
        \item $T^2$ is positive semi-definite;
        \item $T$ is binormal and complex symmetric, and $U$ is self-adjoint. 
    \end{enumerate}
    \end{theorem}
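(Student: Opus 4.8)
The plan is to prove the two implications separately, with Lemma \ref{u2} serving as the main engine in both directions.

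For the direction $(2) \Rightarrow (1)$ I would argue as follows. Assume $T = U|T|$ is binormal and complex symmetric with $U$ self-adjoint. Since $T$ is complex symmetric, the argument used in the proof of Theorem \ref{duggal} shows that $U$ is unitary; being both unitary and self-adjoint, $U$ satisfies $U^2 = U^*U = I$. By Lemma \ref{u2} the polar decomposition of $T^2$ is $T^2 = U^2|T^2| = |T^2|$, which is positive semi-definite. This direction is routine.

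The substance of the theorem is the direction $(1) \Rightarrow (2)$. Assuming $T^2$ is positive semi-definite, it is in particular self-adjoint, hence normal, so by Theorem \ref{square} the operator $T$ is binormal and complex symmetric; as before, complex symmetry forces $U$ to be unitary. Applying Lemma \ref{u2} again, $T^2 = U^2|T^2|$ is the polar decomposition of $T^2$, and because $T^2 \geq 0$ we have $|T^2| = T^2$. The identity $U^2|T^2| = T^2 = |T^2|$ then forces $U^2 = I$ on $\overline{\operatorname{ran}|T^2|} = \overline{\operatorname{ran}T^2}$. What remains is to upgrade this to $U^2 = I$ on all of the space, which together with unitarity yields $U = U^*$.

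I expect this upgrade to be the main obstacle, and it is genuinely necessary: the unitary $U$ in the polar decomposition of a non-injective complex symmetric operator is only determined on $(\ker T)^{\perp}$, with a free unitary completion $\ker T \to \ker T^*$ remaining, and this completion is precisely what decides whether $U$ can be made self-adjoint. For example, $T = \left[\begin{smallmatrix} 0 & 1 \\ 0 & 0\end{smallmatrix}\right]$ has $T^2 = 0 \geq 0$, so the range condition is vacuous, yet only one of the two unitary completions of $U$ is self-adjoint. To handle the general case I would use the factorization $U = CJ$ for a conjugation $J$ (with $C$ the conjugation satisfying $CTC = T^*$), exactly as in the proof of Theorem \ref{binsquarecso} via \cite[Theorem 2]{GarciaPutinar2}. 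Then $U^* = JC$, so $U$ is self-adjoint if and only if $C$ and $J$ commute, which in turn is equivalent to $U^2 = I$. Now positivity gives the key extra relation: since $T^2 \geq 0$ is self-adjoint, $C T^2 C = (T^2)^* = T^2$, so $C$ commutes with $T^2$. The delicate computational heart of the proof is to combine this commutation, the factorization $T = CJ|T|$, and the binormality relation of Lemma \ref{TTcommutation} to conclude that $C$ and $J$ must commute, thereby fixing the completion on $\ker T$ and forcing $U^2 = I$ throughout.
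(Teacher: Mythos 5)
Your two implications track the paper's proof essentially line for line: the direction $(2)\Rightarrow(1)$ is Lemma \ref{u2} plus $U^2=I$, and the main chain of $(1)\Rightarrow(2)$ --- positive semi-definite implies self-adjoint implies normal, then binormality and complex symmetry of $T$, then $T^2 = U^2|T^2|$ and $T^2 = |T^2|$ via Lemma \ref{u2} --- is exactly the paper's argument. (You cite Theorem \ref{square} for the middle step, which is the correct reference; the paper mis-cites Theorem \ref{binsquarecso} at that point.) The paper then ends precisely where your ``routine'' part ends: it concludes $U^2 = I$ outright from ``the polar decomposition is unique'' and stops. In other words, the paper silently performs the very upgrade from $U^2 = I$ on $\overline{\operatorname{ran}|T^2|}$ to $U^2 = I$ everywhere that you correctly single out as the obstacle, so your criticism of that step is apt.

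The difficulty is that your proposal defers exactly this step, and the repair you sketch cannot succeed in the form you state it. You propose to prove that $C$ and $J$ \emph{must} commute, given that $C$ commutes with $T^2$, that $T$ is binormal, and that $T = CJ|T|$ with $J$ a conjugation commuting with $|T|$. Your own example refutes this: for $T = \left[\begin{smallmatrix} 0 & 1 \\ 0 & 0 \end{smallmatrix}\right]$ with the flip conjugation $C(z_1,z_2) = (\overline{z_2},\overline{z_1})$, the antilinear maps $J_\lambda = CU_\lambda$ with $U_\lambda = \left[\begin{smallmatrix} 0 & 1 \\ \lambda & 0 \end{smallmatrix}\right]$, $|\lambda|=1$, are all conjugations commuting with $|T| = \operatorname{diag}(0,1)$, and every hypothesis on your list holds for every $\lambda$ (in particular $C$ commutes with $T^2 = 0$), yet $CJ_\lambda = J_\lambda C$ only when $\lambda = 1$. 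Since the hypotheses do not determine $J$ on $\ker T$, no computation can force the commutation; indeed, taking $U = U_{-1}$ makes your example a counterexample to the literal statement of the theorem, with $(1)$ holding and $(2)$ failing for a perfectly valid polar factor. The statement can be saved in two ways, and your proof would need to commit to one: either add the hypothesis that $T^2$ is injective (so $|T|$ and $|T^2|$ have dense range, $U$ is uniquely determined, and the paper's uniqueness step --- and hence your proof --- becomes airtight), or weaken $(2)$ to ``$U$ \emph{may be chosen} self-adjoint,'' which then requires an existence argument for a suitable unitary completion of the canonical partial isometry on $\ker T$ that neither you nor the paper supplies. As written, the ``delicate computational heart'' you promise is not merely missing; its stated goal is false.
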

    \begin{proof} Suppose $T$ is binormal, and complex symmetric (so that $U$ is unitary), and $U$ is self-adjoint (so that $U^2 = I$). By the Lemma \ref{u2}, $T^2 = U^2|T^2| = |T^2|$. Then $T^2$ is positive semi-definite, since it is the unique positive root of the positive semi-definite operator $T^*T^*TT$. \\
    In the other direction, suppose $T^2$ is positive semi-definite. Then $T^2$ is self-adjoint, and therefore normal. Then by Theorem \ref{binsquarecso}, $T$ is complex symmetric and binormal. Then by Lemma \ref{u2}, $T^2 = U^2|T^2|$. But since $T^2$ is positive semi-definite, we have $T^2 = |T^2|$. Therefore, since the polar decomposition is unique, we have $U^2 = I$. 

    \end{proof}

\section{Further Questions} \label{questions}
    
    \begin{enumerate}
        \item Is there a better exact characterization of when a binormal operator is complex symmetric?
        \item Is there a nice, exact characterization of binormal, complex symmetric operators? 
        \item Does every binormal, complex symmetric operator have an invariant subspace?
        \item What can be said, in general, about the spectrum of binormal, complex symmetric operators?
        \item An operator is called \textit{centered} if the doubly-infinite sequence $$\{ \dots, (T^2)^*T^2, T^*T, TT^*, T^2(T^2)^*, \dots  \}$$ is a set of mutually commuting operators. Every centered operator is clearly binormal. By \cite[Theorem 3.2.12]{Saji}, an operator is centered if and only if every iteration of the Aluthge transform is binormal. Therefore, if $T$ is centered and complex symmetric, so is $\tilde{T}$. Example \ref{compop} is centered and complex symmetric, as is any other involution. What stronger theorems can be made if $T$ is assumed to be centered and complex symmetric, rather than binormal and complex symmetric?
    \end{enumerate}
    
\section{Acknowledgements}

The authors would like to thank the Women's Giving Circle at Taylor University for funding this research.

\footnotesize

\bibliographystyle{amsplain}

\end{document}